\title{Anomalous Vacillatory Learning}
\keywords{Inductive Inference, Learning Theory, Vacillatory Learning}
\subjclass[2010]{Primary 03D80, 68Q32}
\author{Achilles A. Beros}
\address{Department of Mathematics\\
University of Wisconsin - Madison\\
Madison, WI 53706}
\email{aberos@math.wisc.edu}
\newtheorem{Theorem}{Theorem}[section]
\theoremstyle{definition}
\newtheorem{Definition}[Theorem]{Definition}
\begin{document}

\begin{abstract}
In 1986, Osherson, Stob and Weinstein asked whether two variants of anomalous vacillatory learning, TxtFex$^*_*$ and TxtFext$^*_*$, could be distinguished ~\cite{STL}.  In both, a machine is permitted to vacillate between a finite number of hypotheses and to make a finite number of errors.  TxtFext$^*_*$-learning requires that hypotheses output infinitely often must describe the same finite variant of the correct set, while TxtFex$^*_*$-learning permits the learner to vacillate between finitely many different finite variants of the correct set.  In this paper we show that TxtFex$^*_*$ $\neq$ TxtFext$^*_*$, thereby answering the question posed by Osherson, \textit{et al}.  We prove this in a strong way by exhibiting a family in TxtFex$^*_2 \setminus \mbox{TxtFext}^*_*$.
\end{abstract}

\maketitle

\section{Introduction}

In order to prove TxtFex$^*_*$ $\neq$ TxtFext$^*_*$, we explicitly construct a family that is TxtFex$^*_2$-learnable, but not TxtFext$^*_*$-learnable.  We diagonalize against every attempt to TxtFext$^*_*$-learn the family by including, for each machine, a subfamily that witnesses the machine's failure to TxtFext$^*_*$-learn the family.  Each subfamily is produced by means of an effective construction and the entire family is uniformly computably enumerable ($u.c.e.$).\par

All sets considered are subsets of the natural numbers and all families are collections of such subsets.  We will use $\langle x, y\rangle$ to denote a computable pairing function.  Given natural numbers $e$ and $s$, $W_{e,s}$ will denote the result of computing the set coded by $e$, up to $s$ stages, using a fixed computable numbering of $c.e.$ sets.  By $\{ D_n \}_{n \in \mathbb N}$ we mean a fixed computable enumeration of all finite sets.  Lower case Greek letters will typically refer to strings of natural numbers.  Enumerations (called texts in learning theory) will be treated either as infinite strings or as functions on the natural numbers, depending on which is most appropriate in the given setting.  Initial segments of enumerations will feature throughout this paper and will either be denoted by lowercase Greek letters, as mentioned above, or by initial segments of functions.  To switch from finite ordered lists to unordered sets, we say that content$(\sigma) = \{x\in \mathbb N : \exists n ( x = \sigma(n) )\}$; we use the same notation when switching from enumerations to the enumerated set.  We write $A =^*B$ when the symmetric difference of $A$ and $B$ is finite.   When we wish to specify a bound on the cardinality of the symmetric difference, we write $A=^c B$, meaning that the symmetric difference of $A$ and $B$ has cardinality less than or equal to~$c \in \mathbb N$.\par

Given a fixed computable enumeration of all effective learning machines, functions from $\mathbb N^{<\mathbb N}$ to $\mathbb N$, $M_e$ denotes the learner coded by $e$.  In general, learners will be denoted by $M$. 

\begin{Definition}[\cite{POV}]
Let $M$ be a learning machine and $i,j \in \mathbb N\cup\{*\}$.  The definition of TxtFex$^i_j$-learning is in four parts.
\begin{enumerate}
\item $M$ TxtFex$^i_j$-identifies an enumeration $f$ if and only if there is a finite set $S$ with card$(S)\leq j$ such that $(\forall^{\infty} n) (\forall a \in S) (M(f\upharpoonright n) \in S \wedge W_a =^i \mbox{content}(f))$.  If $j = *$, then we place no bound on card$(S)$.
\item $M$ TxtFex$^i_j$-learns a $c.e.$ set $A$ if and only if $M$ TxtFex$^i_j$-identifies every enumeration for $A$.
\item $M$ TxtFex$^i_j$-learns a family of $c.e.$ sets if and only if $M$ TxtFex$^i_j$-learns every member of the family.
\item $\mathcal F$ is TxtFex$^i_j$-learnable (denoted $\mathcal F\in$ TxtFex$^i_j$) if and only if there is a machine $M$ that TxtFex$^i_j$-learns $\mathcal F$.
\end{enumerate}
\end{Definition}

\begin{Definition}[\cite{STL}]
Let $M$ be a learning machine and $i,j \in \mathbb N\cup\{*\}$.  The definition of TxtFext$^i_j$-learning is analogous to that of TxtFex$^i_j$. 
\begin{enumerate}
\item $M$ TxtFext$^i_j$-identifies an enumeration $f$ if and only if there is a finite set $S$ with card$(S)\leq j$ such that $(\forall^{\infty} n) (\forall a, b \in S)(M(f\upharpoonright n) \in S \wedge W_a = W_b =^i \mbox{content}(f))$.  If $j = *$, then we place no bound on card$(S)$.
\item $M$ TxtFext$^i_j$-learns a $c.e.$ set $A$ if and only if $M$ TxtFext$^i_j$-identifies every enumeration for $A$.
\item $M$ TxtFext$^i_j$-learns a family of $c.e.$ sets if and only if $M$ TxtFext$^i_j$-learns every member of the family.
\item $\mathcal F$ is TxtFext$^i_j$-learnable (denoted $\mathcal F\in$ TxtFext$^i_j$) if and only if there is a machine $M$ that TxtFext$^i_j$-learns $\mathcal F$.
\end{enumerate}
\end{Definition}

Inspection of the definitions reveals that TxtFext$^i_j$ is a weaker learning criterion than TxtFex$^i_j$ in the sense that every TxtFext$^i_j$-learnable family is also TxtFex$^i_j$-learnable, i.e. TxtFext$^i_j \subseteq \mbox{TxtFex}^i_j$.  \par

The following two theorems tantalizingly hinted that TxtFext$^*_*$ might be equivalent to TxtFex$^*_*$.  As we shall see, this is not the case.

\begin{Theorem}[Fulk, Jain, Osherson]
$(\forall i,j \in \mathbb N)(\mbox{TxtFex}^i_j \subseteq \mbox{TxtFext}^{ci}_j)$, where $c$ depends only on $j$.
\end{Theorem}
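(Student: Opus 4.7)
Given a TxtFex$^i_j$-learner $M$, I will construct a TxtFext$^{(j+1)i}_j$-learner $M'$, so that $c=j+1$ suffices. The core observation is a union bound: on any text $f$ for $A\in\mathcal F$ the outputs of $M$ eventually all lie in some set $S\subseteq\mathbb N$ with $\mathrm{card}(S)\le j$ and $W_a=^i A$ for every $a\in S$, and the union $B:=\bigcup_{a\in S}W_a$ satisfies $|B\triangle A|\le(j+1)i$. Indeed, $B\setminus A\subseteq\bigcup_{a\in S}(W_a\setminus A)$ is a union of at most $j$ sets of size $\le i$ each, contributing at most $ji$ elements, while $A\setminus B=\bigcap_{a\in S}(A\setminus W_a)$ is contained in $A\setminus W_{a_0}$ for any fixed $a_0\in S$, contributing at most $i$ more.

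To turn this into TxtFext behavior, I use $s$-$m$-$n$ padding: fix a computable $p$ with $W_{p(a,e)}=W_e$ for all $a,e$, so that varying $a$ gives distinct indices for the same $c.e.$ set. Define $M'(\sigma):=p(M(\sigma),e_\sigma)$, where $e_\sigma$ is a uniformly computable canonical $c.e.$ index for $B_\sigma:=\bigcup_{a\in S_\sigma}W_a$ and $S_\sigma$ is a computable approximation to $S$. If $S_\sigma$ stabilizes to a set whose union is $B$, then $e_\sigma$ also stabilizes, and since $M(\sigma)$ eventually ranges over the at most $j$ elements of $S$, $M'$ vacillates between at most $j$ distinct padded indices in the limit, all coding the same set $B$.

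The natural candidate for $S_\sigma$ is $\{M(\sigma\upharpoonright l):k^*(\sigma)\le l\le|\sigma|\}$, where $k^*(\sigma)$ is the least $k$ such that $|\{M(\sigma\upharpoonright l):k\le l\le|\sigma|\}|\le j$. As $|\sigma|$ grows, each new output can only enlarge these running sets, so $k^*(\sigma)$ is nondecreasing in $|\sigma|$; and it is bounded above by the stage $N$ at which $M$ first permanently enters $S$, so $k^*$ converges to some $k_\infty$ and $S_\sigma$ stabilizes as well.

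The main obstacle is that the stabilized $S_\sigma$ may include indices output by $M$ during the tail of the transient phase (at positions in $[k_\infty,N)$), whose $W_a$ are not guaranteed to be $i$-variants of $A$, so $B_\sigma$ could inherit arbitrary anomalies from them. I would handle this by composing $S_\sigma$ with an effective consistency filter --- for instance, admitting $a$ only if $|W_{a,|\sigma|}\cap\mathrm{content}(\sigma)|\ge|\mathrm{content}(\sigma)|-i$, a condition satisfied in the limit by every $a\in S$ but eventually failing for any transient index whose $W_a$ misses an unbounded portion of $A$ --- and, if necessary, enlarging $c$ to a larger function of $j$ to absorb the finitely many residual transient contributions. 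The delicate point is that indices in $S$ may be produced by $M$ arbitrarily sparsely, so the filter and stability criterion must be lenient enough to retain all of $S$ while still excluding the transient.
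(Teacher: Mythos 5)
The paper itself does not prove this theorem: it is quoted from Fulk, Jain and Osherson and only cited (their ``Open problems'' paper), so your proposal has to stand on its own. Its skeleton is reasonable --- the union bound $|(\bigcup_{a\in S}W_a)\triangle A|\le (j+1)i$ is correct, the padding trick is harmless, and your $k^*$/$S_\sigma$ bookkeeping does stabilize along any text, giving at most $j$ limiting outputs that all code the same set \emph{if} the index $e_\sigma$ converges. The gap is exactly at the point you yourself flag as ``the main obstacle,'' and the fix you sketch does not close it.

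First, your consistency filter only penalizes omissions, never commissions. Take $j\ge 2$ and a transient index $b$ with $W_b=\mathbb N$, output once at the very start, after which $M$ outputs a single correct index forever; then $k^*$ stays $0$, so $b$ survives into the stabilized pool, and it passes the test $|W_{b,|\sigma|}\cap\mathrm{content}(\sigma)|\ge|\mathrm{content}(\sigma)|-i$ at every stage. The union then contains $\mathbb N$, which differs from a coinfinite target $A$ on an \emph{infinite} set. This cannot be absorbed by ``enlarging $c$'': TxtFext$^{ci}_j$ requires a symmetric difference bounded by a constant depending only on $i$ and $j$, whereas the residual transient contribution need not even be finite. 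Second, the claim that the filter is ``satisfied in the limit by every $a\in S$'' is false as stated: the test compares $\mathrm{content}(\sigma)$ against $W_{a,|\sigma|}$, and the text may present elements of $A$ far faster than the fixed enumeration of $W_a$ produces them, so $|\mathrm{content}(\sigma)\setminus W_{a,|\sigma|}|$ can exceed $i$ for every sufficiently long $\sigma$ even when $W_a=A$; and if the verdict on some genuine $a$ merely oscillates, then $e_\sigma$ never converges and $M'$ outputs infinitely many distinct indices, violating even the finite-set clause of the definition. So the missing idea is a mechanism that simultaneously (i) eventually expels hypotheses with too many \emph{extra} elements without ever expelling any of the at most $j$ true ones, and (ii) is sufficiently lagged or monotone that the synthesized index actually converges; supplying such a mechanism is the real content of the Fulk--Jain--Osherson proof, and the proposal does not provide it.
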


\begin{Theorem}[Fulk, Jain, Osherson]\label{fulketal}
$(\forall i \in \mathbb N)(\mbox{TxtFex}^i_* \subseteq \mbox{TxtFext}^*_*)$.
\end{Theorem}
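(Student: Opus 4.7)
Given a TxtFex$^i_*$-learner $M$ for $\mathcal F$, my plan is to build a $\mbox{TxtFext}^*_*$-learner $M'$ whose conjectures, in the limit, all point to a single $*$-variant of the target set. The key observation is that on any text $f$ for $A\in\mathcal F$, $M$ vacillates in the limit among a finite set $S$ of conjectures, each satisfying $W_e=^i A$. Hence $U:=\bigcup_{e\in S}W_e$ satisfies $|U\mathbin{\triangle}A|\leq i\cdot(|S|+1)$ and is a finite variant of $A$, so any index for $U$ is an acceptable $\mbox{TxtFext}^*_*$-conjecture.

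Using the $s$-$m$-$n$ theorem, fix a computable $h$ with $W_{h(F)}=\bigcup_{e\in F}W_e$ for every canonically coded finite $F\subseteq\mathbb N$, and set $M'(\sigma):=h(F_\sigma)$, where $F_\sigma$ is a finite set of $M$'s conjectures on initial segments of $\sigma$ chosen to approximate $S$. A natural candidate is the frequency filter $F_\sigma:=\{e:|\{n\leq|\sigma|:M(\sigma\upharpoonright n)=e\}|\geq t_\sigma\}$ for a slowly growing threshold $t_\sigma\to\infty$. If $F_\sigma$ stabilizes to $S$, then $M'(\sigma)$ stabilizes to $h(S)$, and $W_{h(S)}=U=^* A$, yielding $\mbox{TxtFext}^*_1$ (hence $\mbox{TxtFext}^*_*$) learnability.

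For the verification, fix a text $f$ for $A$ and note that the set $R:=\{M(f\upharpoonright n):n\in\mathbb N\}$ is finite: $M$ makes only finitely many distinct conjectures before stabilizing into $S$. Writing $R=R_0\cup S$, where $R_0$ collects the pre-stabilization conjectures (each output only finitely often), each $e\in R_0\setminus S$ has bounded count and should eventually fall below $t_\sigma$, while each $e\in S$ is output infinitely often and should eventually meet it, driving $F_\sigma\to S$.

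The main obstacle lies in calibrating $t_\sigma$: because the rates at which elements of $S$ are output can be arbitrarily sparse depending on $M$ and $f$, no fixed growth rate $t(|\sigma|)\to\infty$ works uniformly across all learners and texts. I would therefore make the threshold adaptive---for example, setting $t_\sigma$ to one less than the second-smallest observed count among recurring indices, or replacing the frequency filter with a window-based one $F_\sigma:=\{M(\sigma\upharpoonright n):g(|\sigma|)\leq n\leq|\sigma|\}$ for a suitable $g(|\sigma|)$ that eventually exceeds the last occurrence of every $e\in R_0$. The delicate point is to ensure that $F_\sigma$ does not oscillate among proper subsets of $S$ whose corresponding unions differ as c.e.\ sets, which would violate the $\mbox{TxtFext}^*_*$ requirement that all infinitely-often conjectures compute the same set.
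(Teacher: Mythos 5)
Since the paper does not reprove this theorem (it cites Fulk--Jain--Osherson for it), your attempt has to be judged on its own terms, and as it stands it has an essential gap, not merely a technical one. The step you leave open --- forcing the finite set $F_\sigma$ of surviving conjectures to stabilize (or at least forcing all limiting outputs of $M'$ to code the \emph{same} set) --- is the entire content of the theorem, and the mechanisms you propose cannot deliver it. Notice that nothing in your construction uses the finiteness of the anomaly bound $i$ except the opening observation that $U=\bigcup_{e\in S}W_e$ is a finite variant of $A$; but that observation is equally true when $i=*$, since a finite union of $*$-variants of $A$ is a $*$-variant of $A$. So if your scheme (frequency threshold, or sliding window, together with the union operator $h$) could be completed as described, the identical argument would show TxtFex$^*_*\subseteq$ TxtFext$^*_*$, contradicting Theorem \ref{main} of this paper, which exhibits a family in TxtFex$^*_2\setminus\mbox{TxtFext}^*_*$. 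Hence any correct proof must exploit the finite bound $i$ in a deeper way --- for instance, that any two correct conjectures differ on at most $2i$ points, so that agreement of hypotheses can be monitored on a designated finite portion of $\mathbb N$ (compare the role of the interval $[0,k)$ in the $(e,k)$-stabilizing sequences of the diagonalization).

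Concretely, here is where your two candidate filters break. With a window $F_\sigma=\{M(\sigma\upharpoonright n): g(|\sigma|)\le n\le|\sigma|\}$ and $g\to\infty$, the window does eventually exclude every conjecture that is output only finitely often, but the conjectures output infinitely often may recur with unbounded gaps, so the window content can oscillate forever among different subsets $F$ of them; the sets $\bigcup_{e\in F}W_e$ for these various $F$ are each $=^*A$ but need not be equal as c.e.\ sets, so $M'$ is only a TxtFex$^*_*$-learner --- exactly the distinction at stake. A threshold $t_\sigma\to\infty$ has the dual failure, and ``adaptive'' choices of $t_\sigma$ or $g$ face the same obstruction: the set of indices output infinitely often is a $\Pi^0_2$ property of the text, so no limiting procedure of the kind you sketch can be guaranteed to converge to it (and, as noted above, if one could, it would prove too much). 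A smaller slip worth fixing: the definition of TxtFex$^i_*$ does not say that every $e\in S$ is output infinitely often, only that almost all outputs lie in $S$; your verification paragraph conflates $S$ with the set of infinitely-often conjectures.
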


For proofs of these theorems, see ~\cite{OP}.\par

In our concluding remarks, we shall make use of Theorem \ref{fulketal} together with our own result to describe an interesting relationship between the two notions of anomalous vacillatory learning.

\section{TxtFex$^*_2$ $\neq$ TxtFext$^*_*$}

We begin with a heuristic overview of the diagonalization process.  Intuitively, we are searching for a string, $\sigma$, on which the learner commits to hypothesizing a finite number of different codes for the same set on all extensions of $\sigma$.  Such a string may not exist, but the construction will be such that if no string can be found, then the family under construction will include a set, on some enumeration of which, the machine never commits to output only hypotheses that code a single set.  On the other hand, if $\sigma$ does exist, the construction will produce two sets in the family that contain content$(\sigma)$ and have infinite symmetric difference.\par

We treat each step of the diagonalization as indexed by $e$ and consider the learner, $M_e$.  That step of the diagonalization will produce a family, $\mathcal L_e$, that $M_e$ cannot TxtFext$^*_*$-learn.

\begin{Theorem}\label{main}
There is a $u.c.e.$ family, $\mathcal L$, that is TxtFex$^*_2$-learnable, but is not TxtFext$^*_*$-learnable.
\end{Theorem}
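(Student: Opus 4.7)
The plan is to build a uniformly $c.e.$ family $\mathcal L = \bigcup_{e \in \mathbb N} \mathcal L_e$ whose subfamilies $\mathcal L_e$ live on pairwise disjoint columns $\{\langle e,x\rangle : x \in \mathbb N\}$, so that every element of any set in $\mathcal L$ reveals the index $e$ against which we are diagonalizing. For each $e$, an effective construction produces at most two $c.e.$ sets $A^e_0, A^e_1 \in \mathcal L_e$ that together witness $M_e \notin \mbox{TxtFext}^*_*$, and then one verifies that the whole $\mathcal L$ is TxtFex$^*_2$-learnable by a single machine.

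The core of the $e$-th module is a search for a locking string for $M_e$: a finite $\sigma$ such that on every extension $\tau$, $M_e(\tau)$ lies in a fixed finite set $S$ whose indices all code a common set. At each stage $s$ I would maintain a current candidate $\sigma_s$ and search by brute force for a $\tau \supseteq \sigma_s$ of length at most $s$ that visibly violates locking, i.e.\ either produces a new value of $M_e$ or exhibits two previously seen values $a,b$ with $W_{a,s} \neq W_{b,s}$. If such a $\tau$ is found, set $\sigma_{s+1} := \tau$, enumerate content$(\tau)\setminus$content$(\sigma_s)$ into both $A^e_0$ and $A^e_1$, and also absorb into both sides every ``decorator'' that had been deposited on only one side. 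If no $\tau$ is found, set $\sigma_{s+1} := \sigma_s$ and deposit a fresh decorator $\langle e, 2s\rangle$ into $A^e_0$ only and $\langle e, 2s+1\rangle$ into $A^e_1$ only.

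This leaves two cases to verify. If violations are found infinitely often, the absorption step forces $A^e_0 = A^e_1$ and the $\sigma_s$ generate an infinite text on which, by the definition of ``violation,'' $M_e$ cannot satisfy any TxtFext$^*_*$ commitment; hence $M_e$ fails on $A^e_0$. If instead $\sigma_s$ stabilizes from some stage on to some $\sigma_\infty$, then $\sigma_\infty$ is a genuine locking string and post-stabilization decorators pile up unabsorbed on opposite sides, giving $A^e_0$ and $A^e_1$ infinite symmetric difference with both extending content$(\sigma_\infty)$; a TxtFext$^*_*$-learner on either would force the common set coded by the locked indices to be $=^*$ to each of them, a contradiction. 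For the positive side, a TxtFex$^*_2$-learner $N$ reads the column tag from the first element, uniformly simulates the $e$-th module to obtain $c.e.$ indices $c_0,c_1$ for $A^e_0, A^e_1$, and outputs these alternately until a decorator appearing in only one of them is observed, at which point $N$ commits to the corresponding index; in the locking case this commitment eventually occurs because the target has infinitely many of its own decorators, and in the non-locking case the two indices already code the same set, so in either case at most two conjectures appear infinitely often and both code finite variants of the target.

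The principal obstacle is that ``$\sigma$ is locking'' is not decidable, so the construction cannot branch on it directly. The decorator-absorption protocol above is the device that makes both outcomes simultaneously $u.c.e.$: in the non-stabilizing outcome absorption collapses $A^e_0$ and $A^e_1$ to one set on which failure is produced by the violations themselves, while in the stabilizing outcome the unabsorbed decorators yield two sets with infinite symmetric difference that trap the locked hypotheses. A secondary subtlety is choosing the $\Sigma_1$ notion of ``visible violation'' strong enough that a truly violation-free $\sigma_\infty$ genuinely satisfies the locking property needed in the second case, and weak enough that the search is effective.
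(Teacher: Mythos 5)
Your stabilizing (Case 2) outcome and the overall architecture are in the right spirit, but the non-stabilizing outcome has a genuine gap, and it is exactly where the paper has to work hardest. Your ``visible violation'' test is too weak: a stage-$s$ disagreement $W_{a,s}\neq W_{b,s}$ is perfectly compatible with $W_a=W_b$, so two indices for the \emph{same} set enumerated at different speeds can trigger violations at infinitely many stages. In that scenario your candidate never stabilizes, the absorption step collapses $A^e_0=A^e_1$, and yet $M_e$ may be genuinely locked on finitely many indices all coding that one (correct) set --- so $M_e$ TxtFext$^*_*$-learns the only set your module produced, and no diagonalization happens. The sentence ``by the definition of violation, $M_e$ cannot satisfy any TxtFext$^*_*$ commitment'' is precisely the step that fails, and you cannot repair it by testing genuine inequality of $W_a$ and $W_b$, since that is not $\Sigma^0_1$. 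The paper's construction is built to dodge exactly this: it uses a \emph{hierarchy} of $(e,k)$-stabilizing sequences in which condition (2) at level $k-1$ already bounds all hypotheses output on the entire cone, and condition (3) compares approximations only on $[0,k)$ and at matched stages $|\sigma|+t$. With only finitely many hypotheses in play, if they all coded one set one could take $\sigma$ long enough that their approximations below $k$ have settled, which \emph{would} produce a level-$k$ stabilizing sequence; hence failure at level $k$ really does mean that hypotheses coding distinct sets occur cofinally, which is what the bad text in the paper's Case 1 needs. Your single-level search with the naive violation predicate has no analogue of this argument, and this is the missing idea rather than a presentational detail.

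Two secondary problems, both fixable but currently unaddressed: (i) in the non-stabilizing outcome the diagonal text must enumerate exactly $A^e_0$, but your decorators are absorbed into the sets and never into the text generated by the $\sigma_s$, so the text you diagonalize on need not be a text for any member of $\mathcal L_e$; (ii) your TxtFex$^*_2$ learner ``commits'' upon seeing a decorator currently in only one of $A^e_0,A^e_1$, but non-membership in the other set is only co-c.e.\ information --- the decorator may later be absorbed, and an irrevocable commitment can then fix the wrong index in the stabilizing case, where the two sets are not finite variants of each other. The paper sidesteps both by a different family design: it puts \emph{all} finite variants of a co-even set $R_e$ and a co-odd set $\hat R_e$ into $\mathcal L_e$ (so the Case 1 target $L_e$ and the Case 2 targets containing the locking string are automatically present), and its learner never commits, instead tracking the parity of the least missing element, which stabilizes on any text and yields vacillation between at most two correct hypotheses.
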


\begin{proof}
Fix a learner, $M_e$.  We begin by describing what is needed to prevent $M_e$ from TxtFext$^*_*$-learning.  The result of this step will be a family, $\mathcal L_e$.  Let $L_e = \{e, e+1, \ldots\}$.  Depending on the course of the construction, $L_e$ may or may not be included in $\mathcal L_e$.  Motivated by our interest in strings on which $M_e$ has committed to a finite collection of hypotheses, we make the following definition.

\begin{Definition}\label{stab}
A string $\sigma$ is said to be an $(e,k)$-stabilizing sequence if and only if the following conditions are met for all $\tau \succeq \sigma$ such that content$(\tau) \subseteq L_e$ and $t\in \mathbb N$:
\begin{enumerate}
\item$\{e, e+1, \ldots, e+k\} \subseteq \mbox{content} (\sigma)$
\item$M_e(\tau) \leq |\sigma|$ 
\item$W_{M_e(\sigma),|\sigma|+t}\cap [0,k) = W_{M_e(\tau),|\sigma|+t}\cap [0,k)$.
\end{enumerate}
\end{Definition}

In essence, Definition \ref{stab} describes strings that adhere to a certain form, that define cones in $\{ \tau : \mbox{content}(\tau)\subseteq L_e \}$ on which $M_e$ outputs no new hypotheses, and on extensions of which $M_e$ outputs hypotheses for sets that are equal.  Since this last claim cannot be verified in the limit (it is a $\Pi_2^0$ predicate), the above definition describes a finite approximation.\par

The predicate ``$\sigma$ is not an $(e,k)$-stabilizing sequence" is $\Sigma^0_1$ as it requires only a witnessing string and natural number to verify.  Thus, we may define a sequence of strings that converges in the limit to an $(e,0)$-stabilizing sequence, $\sigma_{e,0}$, if such a string exists.  Extending this strategy, we will construct $\sigma_{e,n,s}$ for all $n,s \in \mathbb N$, such that 

\begin{itemize}
\item $\sigma_{e,n,s} \preceq \sigma_{e,n+1,s}$ for all $n,s\in \mathbb N$, if both strings are defined.
\item $\sigma_{e,0,0}, \sigma_{e,0,1}, \ldots$ converges to an $(e,0)$-stabilizing sequence, if one exists.
\item If $\sigma_{e,k,0}, \sigma_{e,k,1}, \ldots$ converges to a string $\sigma_{e,k}$ for all $k<n$, then \newline $\sigma_{e,n,0},\sigma_{e,n,1},\ldots$ converges to an $(e,n)$-stabilizing sequence, $\sigma_{e,n}$, that extends $\sigma_{e,k}$ for all $k<n$, if such a $\sigma_{e,n}$ exists.
\end{itemize}

Before constructing $\sigma_{e,n,s}$, we introduce some notation.  First, define the following finite set of strings.
$$A(\sigma, s) = \{\tau : (\mbox{content}(\tau)\subseteq L_e) \wedge (\max(\mbox{content}(\tau))\leq s) \wedge (|\tau| \leq s) \wedge (\tau \succeq \sigma)\}$$
Next, let $Q(e,n,\sigma,s)$ be the computable predicate ``there is no string in $A(\sigma,s)$ and natural number less than or equal to $s$ witnessing that $\sigma$ is not an $(e,n)$-stabilizing sequence".  Last, fix a symbol, ?, which will be used to indicate that a string is undefined.  We now give an effective algorithm for computing $\sigma_{e,n,s}$.\par
\medskip

\textbf{Stage 0:}  At this stage, no strings have yet been defined.  We set $\sigma_{e,0,0}$ to be the empty string.\par

\textbf{Stage s+1:}  We set $\sigma_{e,s+1,i} =~?$ for $i \leq s$.  We perform the following actions for each $n$, starting with $n = 0$, up to $n = s$.\par

\begin{enumerate}
	\item  If $\sigma_{e,n,s} \neq~?$, $\sigma_{e,i,s+1} \neq~?$ for all $i < n$, and $Q(e,n,\sigma_{e,n,s},s+1)$, then we set $\sigma_{e,n,s+1} = \sigma_{e,n,s}$.

\item  Otherwise, we consider two possibilities.

\begin{enumerate}
\item  If $\sigma_{e,i,s+1} \neq~?$ for all $i < n$ and there exists $\tau \in A(\sigma_{e,n-1,s+1},s+1)$ (where we replace $\sigma_{e,n-1,s+1}$ with the empty string if $n=0$) such that $Q(e,n,\tau,s+1)$, then we set $\sigma_{e,n,s+1}$ to be the least such $\tau$.

\item  Otherwise, we set $\sigma_{e,n,s+1} =~?$.
\end{enumerate}

\end{enumerate}

Once the process above terminates, we end the stage of the construction.\par
\medskip

Observe that $\{ \sigma_{e,n,s} \}_{s\in \mathbb N}$ converges if and only if $\{ \sigma_{e,k,s} \}_{s\in \mathbb N}$ converges for $k < n$ and there is an $(e,n)$-stabilizing sequence extending the string to which $\{ \sigma_{e,n-1,s} \}_{s\in \mathbb N}$ converges.  Furthermore, if $\{ \sigma_{e,n,s} \}_{s\in \mathbb N}$ converges, it converges to such an $(e,n)$-stabilizing sequence.\par

Define $a_{e,\ell}$ to be the least even number greater than $e+\ell+1$ such that $\sigma_{e,h,s} = \sigma_{e,h,s+1} \neq~?$ for all $h \leq \ell$ and $s \geq a_{e,\ell}$, if it exists.  Let $b_{e,\ell} = a_{e,\ell} + 1$.  These numbers will allow us to monitor the convergence of the sequences, $\{ \sigma_{e,\ell,s} \}_{s\in \mathbb N}$, and control the construction as appropriate.  If $\{ \sigma_{e,k,s} \}_{s\in \mathbb N}$ does not converge for some $k \in \mathbb N$, then $a_{e,\ell}$ will be undefined for $\ell \geq k$.\par

Define two sets
\begin{align*}
R_e &= \{x \in L_e : \forall \ell (x \neq a_{e,\ell})\} \\
\hat{R}_e &= \{x \in L_e : \forall \ell (x \neq b_{e,\ell})\}.
\end{align*}

Observe that $R_e$ is $c.e.$  Because $a_{e,0} < a_{e,1} < \ldots$ and $a_{e,\ell} \geq \ell$, we see that $x \in R_e$ if and only if $x \neq a_{e,\ell}$ for all $\ell \leq x$.  Although $a_{e,\ell}$ is not computable, $x \neq a_{e,\ell}$ is $\Sigma_1^0$.  
$$x \neq a_{e,\ell} \leftrightarrow (\sigma_{e,\ell,x} =~?) \vee (\sigma_{e,\ell,x-1} = \sigma_{e,\ell,x}) \vee (\exists s\geq x) (\sigma_{e,\ell,s} \neq \sigma_{e,\ell,x})$$
Thus, $x \in R_e$ is a finite conjunction of $\Sigma_1^0$ statements.  Similarly, substituting $b_{e,\ell}$ for $a_{e,\ell}$, we see that $\hat{R}_e$ is also $c.e.$  We are now in a position to define $\mathcal L_e$.  Recall that $D_0, D_1,\ldots$ enumerates all finite sets.
$$\mathcal L_e = \{R_e \cup (D_n \cap [e, \infty) ): n \in \mathbb N\} \cup \{\hat{R}_e \cup (D_n  \cap [e, \infty) ): n \in \mathbb N\}$$
We now return to $M_e$, the learner against which we are currently diagonalizing.  We must prove that $M_e$ is incapable of TxtFext$^*_*$-learning $\mathcal L_e$.\par

\smallskip

\textbf{Case 1:}  Suppose there is a minimal $\ell \neq 0$ such that $\sigma_{e,\ell}$ is undefined.  By definition, this implies there is no $\sigma$ extending $\sigma_{e,\ell-1}$ such that $e+\ell \in \mbox{content}(\sigma) \subset L_e$ and $W_{M_e(\sigma),|\sigma|+s}\cap [0,\ell) = W_{M_e(\tau),|\sigma|+s}\cap [0,\ell)$, for all $\tau$ such that $\sigma\prec \tau$ with $\mbox{content}(\tau) \subset L_e$.  Furthermore, since $\sigma_{e,\ell}$ is undefined, $\sigma_{e,i}$ is undefined for all $i \geq \ell$. Consequently, $a_{e,i}$ is undefined for all $i \geq \ell$ and both $R_e$ and $\hat{R}_e$ are cofinite subsets of $L_e$.  For a suitable finite set, $D_n$, we have $R_e \cup D_n = L_e$, and hence, $L_e \in \mathcal L_e$.  By repeatedly selecting extensions on which $M_e$ outputs hypotheses coding distinct sets, we can inductively build an enumeration of $L_e$ on which $M_e$ infinitely often outputs codes for at least two sets that are not equal.  If $\ell = 0$, there is the additional possibility that $M_e$ cannot be made to select a finite collection of hypotheses and restrict its output to that finite list.  The machine has failed to TxtFext$^*_*$-learn $\mathcal L_e$.\par

\smallskip

\textbf{Case 2:}  Suppose that $\sigma_{e,\ell}$ is defined for all $\ell$.  Both $R_e$ and $\hat{R}_e$ are coinfinite sets and have infinite symmetric difference.  By the definition of $\sigma_{e,0}$, for any $\tau$ such that $\sigma_{e,o}\prec \tau$ and content$(\tau) \subset L_e$, $M_e(\tau) \leq |\sigma_{e,0}|$.  We may therefore define a finite list, $h_0, h_1, \ldots, h_n$, of all distinct hypotheses that $M_e$ outputs on extensions of $\sigma_{e,0}$.  Pick $\ell$ sufficiently large so that, for each $i,j \leq n$ for which $W_{h_i} \neq W_{h_j}$, there is an $x \in W_{h_i} \triangle W_{h_j}$ such that $x < \ell$.\par

All hypotheses made by $M_e$ on extensions of $\sigma_{e,\ell}$ contained in $L_e$ must have the same intersection with $[0,\ell-1]$ as $W_{M_e(\sigma_{e,\ell})}$.  By the choice of $\ell$, all such hypotheses must code the same set, yet $\mathcal L_e$ contains two sets that extend $\sigma_{e,\ell}$ and have infinite symmetric difference: content$(\sigma_{e,\ell}) \cup R_e$ and content$(\sigma_{e,\ell}) \cup \hat{R}_e$.  Again, we witness failure by $M_e$ to TxtFext$^*_*$-learn $\mathcal L_e$.\par

\bigskip

For each $e \in \mathbb N$, we have shown that $\mathcal L_e$ is not TxtFext$^*_*$-learnable by $M_e$.  Consequently, $\mathcal L = \bigcup_{e \in \mathbb N} \mathcal L_e$ is not TxtFext$^*_*$-learnable.  We must now verify that $\mathcal L$ is indeed TxtFex$^*_2$-learnable.\par

Every set in $\mathcal L$ is a finite variant of $R_e$ or $\hat{R}_e$ for some $e \in \mathbb N$.  Therefore, a learner need only identify the appropriate $e$ and determine to which of $R_e$ and $\hat{R}_e$ the input enumeration is most similar.  Since $R_e$ is co-even and $\hat{R}_e$ is co-odd, they are identifiable by the numbers not in them. Let $x_e$ and $\hat{x}_e$ be codes for $R_e$ and $\hat{R}_e$, respectively.  For notational ease, let $m_{\sigma} = $ min(content$(\sigma)$) and $n_{\sigma} = \mbox{min}(\{y > m_{\sigma} : y \notin \mbox{content}(\sigma)\})$.  Given $\sigma$, an intial segment of an enumeration for a set in $\mathcal L$, $m_{\sigma}$ is the current guess at the least member of the set (hence the $e$ for which the set is in $\mathcal L_e$) and $n_{\sigma}$ is the current guess at the least element of $L_e$ not in the set.  Define a machine $M$ as follows:\par
$$M(\sigma) = \begin{cases}
       x_e &  \mbox{if }e = m_{\sigma} \wedge (n_{\sigma} \mbox{ is even}),\\
       \hat{x}_e &  \mbox{if }e = m_{\sigma} \wedge (n_{\sigma} \mbox{ is odd}),\\
	0 &  \mbox{otherwise}.
\end{cases}$$

 Suppose that $M$ is receiving an enumeration for $L \in \mathcal L$.  Every set in $\mathcal L$ is either of the form $R_e \cup D_n$ or $\hat{R}_e \cup D_n$, for some $e,n\in \mathbb N$.  By the symmetric relationship between $R_e$ and $\hat{R}_e$, we may assume that $L = R_e \cup D_n$ for some specific $e$ and $n$.  We must consider two cases: $R_e$ is either cofinite or coinfinite.\par

If $R_e$ is cofinite, $\hat{R}_e$ is also cofinite.  As a consequence, $R_e =^* \hat{R}_e$.  Eventually, the enumeration will exhibit the least element of the set being enumerated.  After that stage, $M$ will either output $x_e$ or $\hat{x}_e$.  Given the model of learning, both are correct hypotheses.  If $R_e$ is coinfinite, then $L_e \setminus R_e$ is an unbounded set of even numbers.  The target set is a finite variant of $R_e$.  Hence, the least element not in content$(\sigma)$ and greater than $e$ will be even for cofinitely many initial segments of any enumeration.  In other words, for cofinitely many initial segments, $\sigma$, of any enumeration of $L$, $n_{\sigma}$ is even and $M(\sigma) = x_e$, a code for a finite variant of the enumerated set.\par

We have constructed a family $\mathcal L$ such that, for each computable machine, $\mathcal L$ contains a subfamily that the machine cannot TxtFext$_*^*$-learn, and we have exhibited a specific machine that TxtFex$_2^*$-learns the whole family.  This completes the proof.\par
\end{proof}

\section{Conclusion}

Recall the statement of Theorem \ref{fulketal} from the introduction:

$$(\forall j)(\mbox{TxtFex}^j_* \subseteq \mbox{TxtFext}^*_*).$$

\medskip
\smallskip

Combining this with Theorem \ref{main}, we observe the following intriguing relationship between the anomalous versions of the two learning criteria

$$(\forall j)(\mbox{TxtFex}^j_* \subseteq \mbox{TxtFext}^*_* \subsetneq \mbox{TxtFex}^*_*).$$

\medskip
\smallskip

A great number of other results about vacillatory learning are already known.  Many of the results can be found in a paper of Case's ~\cite{POV} or in Osherson, Stob and Weinstein's book ~\cite{STL}.\par
$ $\par

\textbf{Acknowledgements.}  We would like to thank the anonymous reviewer for important comments and useful feedback on this paper.

\bibliographystyle{plain}

\begin{thebibliography}{1}

\bibitem{POV}
J.~Case.
\newblock The power of vacillation.
\newblock {\em SIAM Journal of Computation}, 49(6):1941--1969, 1999.

\bibitem{OP}
Jain~S. Fulk, M. and D.~Osherson.
\newblock Open problems in "systems that learn".
\newblock {\em Journal of Computer and System Sciences}, 49:589--604, 1994.

\bibitem{STL}
Stob M. Weinstein~S. Osherson, D.
\newblock {\em Systems That Learn: An Introduction to Learning Theory for
  Cognitive and Computer Scientists}.
\newblock MIT Press, Cambridge, MA, 1986.

\end{thebibliography}

\end{document}